\def\pmod #1{\ ({\rm{mod}}\ #1)}
\def\bg{\bigg}
\def\({\bg(}
\def\){\bg)}
\def\sgn{{\rm sgn}}
\def\sgn{{\rm sgn}}
\theoremstyle{plain}
\newtheorem{theorem}{Theorem}
\newtheorem{lemma}{Lemma}
\newtheorem{corollary}{Corollary}
\newtheorem{conjecture}{Conjecture}
\theoremstyle{definition}
\theoremstyle{remark}
\newtheorem{remark}{Remark}
\begin{document}
 \baselineskip=17pt
\hbox{} {}
\medskip
\title[A conjecture of Zhi-Wei Sun on determinants over finite fields]
{A conjecture of Zhi-Wei Sun on determinants over finite fields}
\date{}
\author[H.-L. Wu, Y.-F. She and H.-X. Ni]{Hai-Liang Wu,Yue-Feng She and He-Xia Ni*}

\thanks{2020 {\it Mathematics Subject Classification}.
Primary 11C20; Secondary 11L05, 11R29.
\newline\indent {\it Keywords}. determinants, the Legendre symbol, finite fields.
\newline \indent  The first author was supported by the National Natural Science Foundation of China (Grant No. 12101321 and Grant No. 11971222) and the Natural Science Foundation of the Higher Education Institutions of Jiangsu Province (Grant No. 21KJB110002). The third author was supported by the National Natural Science Foundation of China (Grant No. 12001279).}
\thanks{*Corresponding author.}

\address {(Hai-Liang Wu) School of Science, Nanjing University of Posts and Telecommunications, Nanjing 210023, People's Republic of China}
\email{\tt whl.math@smail.nju.edu.cn}

\address {(Yue-Feng She) Department of Mathematics, Nanjing
University, Nanjing 210093, People's Republic of China}
\email{{\tt she.math@smail.nju.edu.cn}}

\address{(He-Xia Ni)  Department of Applied Mathematics, Nanjing Audit
University, Nanjing 211815, People's Republic of China}
\email{\tt nihexia@yeah.net}

\begin{abstract}
In this paper, we study certain determinants over finite fields. Let $\mathbb{F}_q$ be the finite field of $q$ elements and let $a_1,a_2,\cdots,a_{q-1}$ be all nonzero elements of $\mathbb{F}_q$. Let $T_q=\left[\frac{1}{a_i^2-a_ia_j+a_j^2}\right]_{1\le i,j\le q-1}$ be a matrix over $\mathbb{F}_q$. We obtain the explicit value of $\det T_q$. Also, as a consequence of our result, we confirm a conjecture posed by Zhi-Wei Sun.
\end{abstract}
\maketitle
\section{Introduction}
Let $R$ be a commutative ring. Then for any $n\times n$ matrix $M=[a_{ij}]_{1\le i,j\le n}$ with $a_{ij}\in R$, we use $\det M$ or $|M|$ to denote the determinant of $M$.

Let $p$ be an odd prime and let $(\frac{\cdot}{p})$ be the Legendre symbol. Carlitz \cite{carlitz} studied the following matrix
$$C_p(\lambda)=\bigg[\lambda+\left(\frac{i-j}{p}\right)\bigg]_{1\le i,j\le p-1}\ \ \ \ \ (\lambda\in\mathbb{C}).$$
Carlitz \cite[Theorem 4]{carlitz} proved that the characteristic polynomial of $C_p(\lambda)$ is
$$P_{\chi}(t)=(t^2-(-1)^{(p-1)/2}p)^{(p-3)/2}(t^2-(p-1)\lambda-(-1)^{(p-1)/2}).$$
Later Chapman \cite{chapman,evil} investigated many interesting variants of $C_p$. Moreover, Chapman \cite{evil} posed a challenging conjecture on the determinant of the $\frac{p+1}{2}\times\frac{p+1}{2}$ matrix
$$E_p=\bigg[\(\frac{j-i}{p}\)\bigg]_{1\le i,j\le\frac{p+1}{2}}.$$
Due to the difficulty of the evaluation on $\det E_p$, Chapman called this determinant ``evil" determinant. Finally, by using sophisticated matrix decompositions, Vsemirnov \cite{M1,M2} solved this problem completely.

Along this line, in 2019 Sun \cite{ffadeterminant} studied the following matrix
$$S_p=\bigg[\left(\frac{i^2+j^2}{p}\right)\bigg]_{1\le i,j\le \frac{p-1}{2}},$$
and Sun \cite[Theorem 1.2(iii)]{ffadeterminant} showed that $-\det S_p$ is always a quadratic residue modulo $p$. In the same paper, Sun also investigated the matrix
$$A_p=\bigg[\frac{1}{i^2+j^2}\bigg]_{1\le i,j\le \frac{p-1}{2}}.$$
Sun \cite[Theorem 1.4(ii)]{ffadeterminant} proved that when $p\equiv3\pmod4$ the $p$-adic integer $2\det A_p$ is always a quadratic residue modulo $p$. In addition, let
$$T_p=\bigg[\frac{1}{i^2-ij+j^2}\bigg]_{1\le i,j\le p-1}.$$
Sun \cite[Remark 1.3]{ffadeterminant} posed the following conjecture.
\begin{conjecture}[Zhi-Wei Sun]\label{Conjecture of Sun}
	Let $p\equiv2\pmod3$ be an odd prime.
	Then $2\det T_p$ is a quadratic residue modulo $p$.
\end{conjecture}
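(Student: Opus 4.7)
My plan is to compute $\det T_p$ exactly modulo $p$ by diagonalizing $T_p$ via the multiplicative $\mathbb{F}_p^*$-symmetry of its entries, and then verify the quadratic residue property by Euler's criterion.

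Fix a generator $g$ of $\mathbb{F}_p^*$ and relabel so that $a_i = g^i$. Then
\[
  \frac{1}{a_i^2 - a_i a_j + a_j^2} \;=\; \frac{1}{a_j^{2}} \cdot \frac{1}{(g^{i-j})^2 - g^{i-j} + 1} \;=\; \frac{\phi(i-j)}{a_j^{2}},
\]
where $\phi(m) := 1/(g^{2m} - g^{m} + 1)$. Hence $T_p = M D^{-2}$, where $M_{ij} = \phi(i-j)$ is a $(p-1) \times (p-1)$ circulant and $D = \mathrm{diag}(a_1, \ldots, a_{p-1})$. Since all $(p-1)$th roots of unity lie in $\mathbb{F}_p$, $M$ is diagonalizable over $\mathbb{F}_p$ with eigenvalues
\[
  t_k \;:=\; \sum_{x \in \mathbb{F}_p^*} \frac{x^k}{x^2 - x + 1}, \qquad k = 0, 1, \ldots, p-2,
\]
obtained by substituting $x = g^m$ in the Fourier sum. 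Combined with $\prod_{a \in \mathbb{F}_p^*} a = -1$ (Wilson), this gives $\det T_p \equiv \prod_{k=0}^{p-2} t_k \pmod{p}$.

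To evaluate each $t_k$, I pass to $\mathbb{F}_{p^2}$ and factor $x^2 - x + 1 = (x - \rho)(x - \rho^{-1})$, with $\rho$ a primitive sixth root of unity; note $\rho^p = \rho^{-1}$ (since $p \equiv 5 \pmod{6}$), $\rho^3 = -1$, $\rho + \rho^{-1} = 1$, $(\rho - \rho^{-1})^2 = -3$. Partial-fraction expansion of $x^k/(x^2 - x + 1)$, combined with
\[
  \sum_{x \in \mathbb{F}_p^*} \frac{1}{x - \alpha} \;=\; \frac{\alpha^{p-2}}{\alpha^{p-1} - 1} \quad (\alpha \notin \mathbb{F}_p)
  \qquad\text{and}\qquad
  \sum_{x \in \mathbb{F}_p^*} x^n \;=\; 0 \quad (1 \leq n \leq p-2),
\]
yields, setting $c_m := \rho^m + \rho^{-m}$ and $d_m := (\rho^m - \rho^{-m})/(\rho - \rho^{-1})$ (both $6$-periodic in $m$),
\[
  t_k = -\frac{c_{k+1}}{3} + \frac{c_k - d_k}{2} \quad (k \geq 2), \qquad t_k = -\frac{c_{k+1}}{3} \quad (k \in \{0, 1\}).
\]
Hence $t_k$ depends only on $k \bmod 6$ — with $k = 0$ the single exceptional case, since there the polynomial remainder vanishes — and takes a value in $\{\pm 1/3,\, \pm 2/3\}$.

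Writing $p - 1 = 6q + 4$, tallying the $t_k$-values by residue mod $6$ in $\{0, \ldots, p-2\}$, and using $3^{p-1} \equiv 1 \pmod{p}$, I expect
\[
  \det T_p \;\equiv\; (-1)^{(p+1)/6}\, 2^{(p-2)/3} \pmod{p}, \qquad\text{so}\qquad 2 \det T_p \;\equiv\; (-1)^{(p+1)/6}\, 2^{(p+1)/3}.
\]
Now $6 \mid p+1$ gives $(2^{(p+1)/3})^{(p-1)/2} = 2^{(p^2-1)/6} = (2^{p-1})^{(p+1)/6} \equiv 1$, and $24 \mid p^2 - 1$ (for odd $p \neq 3$) gives $((-1)^{(p+1)/6})^{(p-1)/2} = (-1)^{(p^2-1)/12} = 1$. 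Therefore $\left(\frac{2 \det T_p}{p}\right) = 1$, proving the conjecture. The main technical obstacle is the careful bookkeeping of the partial-fraction remainders in $\mathbb{F}_{p^2}$ — in particular the boundary case $k = 0$, and verifying that the $6$-periodicity of $c_m, d_m$ produces a clean closed-form product — whereas the circulant diagonalization and the final Legendre-symbol check are comparatively routine.
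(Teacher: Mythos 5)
Your proposal is correct, and it arrives at the paper's exact evaluation $\det T_p=(-1)^{(p+1)/2}\,2^{(p-2)/3}$ (your sign $(-1)^{(p+1)/6}$ agrees with this, since $(p+1)/2=3\cdot(p+1)/6$), but by a genuinely different route. The paper substitutes $a_j\mapsto -a_j$, pulls out $a_j^{-2}$, interpolates $x\mapsto(x^2+x+1)^{q-2}$ by an explicit polynomial whose coefficients are $\frac{1}{3}\left(\chi_3(k)+\chi_3(1-k)\right)$, and then applies Krattenthaler's identity $\det[P(X_iY_j)]=\prod_i p_i\prod_{i<j}(X_j-X_i)(Y_j-Y_i)$ together with Lerch's theorem on the sign of $x\mapsto x^{-1}$ to handle the Vandermonde factor. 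You instead order $\mathbb{F}_p^{\times}$ by a generator so that $T_p$ becomes a circulant times a diagonal square, diagonalize the circulant over $\mathbb{F}_p$, and evaluate the eigenvalues $t_k=\sum_{x}x^k/(x^2-x+1)$ by partial fractions in $\mathbb{F}_{p^2}$. The two computations are dual: the coefficients of the paper's interpolating polynomial are, up to the substitution $x\mapsto-x$, reindexing and sign, exactly your circulant eigenvalues, so both proofs ultimately multiply the same $p-1$ quantities; your mod-$6$ tally of the values $\pm 1/3,\pm 2/3$ (with $k=0$ exceptional) plays the role of the paper's product $\prod_{k=2}^{q-2}\left(\chi_3(k)+\chi_3(1-k)\right)=(-2)^{(q-2)/3}$. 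Your route buys self-containedness (no external determinant lemma, no Lerch) and yields the full spectrum of $T_p$ as a by-product; the paper's route has a shorter coefficient computation and is phrased for general odd prime powers $q\equiv2\pmod 3$, though your argument would extend as well. Your final Euler-criterion step is sound: $24\mid p^2-1$ disposes of the sign and $6\mid p+1$ disposes of $2^{(p+1)/3}$. Two small points to tighten in a full write-up: (i) the constant term of the polynomial quotient contributes $-Q_k(0)=(c_k-d_k)/2$ because $\sum_{x\in\mathbb{F}_p^{\times}}x^0=-1$, whereas your displayed power-sum identity covers only $1\le n\le p-2$; and (ii) $k=1$ also has vanishing quotient but happens to obey the generic formula since $c_1=d_1=1$, which is why $k=0$ really is the only exception.
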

Let $\mathbb{F}_q$ be the finite field of $q$ elements and let $$\mathbb{F}_q^{\times}=\mathbb{F}_q\setminus\{0\}=\{a_1,a_2,\cdots,a_{q-1}\}.$$
Motivated by this conjecture, we define a matrix $T_q$ over $\mathbb{F}_q$ by
$$T_q=\bigg[\frac{1}{a_i^2-a_ia_j+a_j^2}\bigg]_{1\le i,j\le q-1}.$$

We obtain the following generalized result.
\begin{theorem}\label{Thm. A}
Let $q\equiv 2\pmod 3$ be an odd prime power and let
$$T_q=\bigg[\frac{1}{a_i^2-a_ia_j+a_j^2}\bigg]_{1\le i,j\le q-1}.$$
Then
$$\det T_q=(-1)^{\frac{q+1}{2}}2^{\frac{q-2}{3}}\in\mathbb{F}_p,$$
where $p$ is the characteristic of $\mathbb{F}_q$.
\end{theorem}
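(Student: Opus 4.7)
The plan is to exploit the cyclic structure of $\mathbb{F}_q^\times$ to reduce $T_q$ to a circulant matrix, then diagonalize via multiplicative characters. Fix a generator $g$ of $\mathbb{F}_q^\times$ and order the $a_k$ so that $a_k = g^{k-1}$ (the determinant is invariant under this choice). Factoring $g^{-(i-1)}$ from row $i$ and $g^{-(j-1)}$ from column $j$ writes $T_q = D U D$, where $D = \mathrm{diag}(g^{-(i-1)})$ and $U(i,j) = (g^{i-j} + g^{j-i} - 1)^{-1}$ is a circulant matrix depending only on $i - j \pmod{q-1}$. Because $(\det D)^2 = g^{-(q-1)(q-2)} = 1$, we get $\det T_q = \det U$. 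Since $g$ is a primitive $(q-1)$-th root of unity in $\mathbb{F}_q$, $U$ is simultaneously diagonalized by the characters of $\mathbb{F}_q^\times$, yielding (after the substitution $x = g^k$)
\[
\det T_q \;=\; \prod_{t=0}^{q-2} \lambda_t, \qquad \lambda_t \;=\; \sum_{x\in\mathbb{F}_q^\times}\frac{x^{t+1}}{x^2-x+1}.
\]

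The heart of the argument is a closed-form evaluation of $\lambda_t$. Since $q \equiv 2 \pmod 3$, the polynomial $x^2 - x + 1$ is irreducible over $\mathbb{F}_q$ with roots $\omega, \omega^{-1} \in \mathbb{F}_{q^2}$, where $\omega$ is a primitive sixth root of unity satisfying $\omega^q = \omega^{-1}$. Partial fractions over $\mathbb{F}_{q^2}$ together with the polynomial-plus-pole decomposition $x^{t+1}/(x-a) = \sum_{k=0}^t a^{t-k} x^k + a^{t+1}/(x-a)$ reduce $\lambda_t$ to linear combinations of the sums $\sum_{x \in \mathbb{F}_q^\times} 1/(x-a)$ for $a = \omega, \omega^{-1}$. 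These are evaluated by the classical identity
\[
\sum_{x \in \mathbb{F}_q^\times}\frac{1}{x-a} \;=\; \frac{1}{a^q - a} + \frac{1}{a} \qquad (a \notin \mathbb{F}_q),
\]
obtained from the logarithmic derivative of $y^q - y = \prod_{x \in \mathbb{F}_q}(y - x)$. After a careful cancellation using $\omega^q = \omega^{-1}$ and $\omega + \omega^{-1} = 1$, I expect the strikingly simple formula
\[
\lambda_t \;=\; \frac{\omega^{t+1} + \omega^{-(t+1)}}{3}.
\]
This cancellation is the main technical obstacle; keeping track of Frobenius conjugates and signs requires care, though each step is elementary.

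Taking the product is then a counting exercise. The quantities $A_k := \omega^k + \omega^{-k}$ depend only on $k \bmod 6$, cycling through $2, 1, -1, -2, -1, 1$ for $k \equiv 0, 1, 2, 3, 4, 5$. Since $3^{q-1} = 1$ in $\mathbb{F}_q$, the denominators cancel and $\det T_q = \prod_{k=1}^{q-1} A_k$. Because $q \equiv 5 \pmod 6$, we may write $q - 1 = 6m + 4$ with $m = (q-5)/6$, and a residue count yields $m$ copies of $2$, $m+1$ copies of $-2$, $2m+1$ copies of $1$, and $2m+2$ copies of $-1$ among $\{A_1, \ldots, A_{q-1}\}$. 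The product therefore equals $(-1)^{m+1} \cdot 2^{2m+1} = (-1)^{(q+1)/6}\, 2^{(q-2)/3}$, and since $3k \equiv k \pmod 2$ for every integer $k$ we have $(-1)^{(q+1)/6} = (-1)^{(q+1)/2}$, completing the proof.
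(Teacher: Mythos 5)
Your argument is correct, and every formula I checked holds: the reduction $T_q=DUD$ with $(\det D)^2=1$, the eigenvalue expression $\lambda_t=\sum_{x\in\mathbb{F}_q^\times}x^{t+1}/(x^2-x+1)$, the closed form $\lambda_t=(\omega^{t+1}+\omega^{-(t+1)})/3$ (the partial-fraction computation gives $\sum_x x^{t+1}/(x-a)=a^{t+1}/(a^q-a)$, and with $\omega^q=\omega^{-1}$ and $(\omega-\omega^{-1})^2=-3$ your cancellation goes through), and the final residue count modulo $6$, which reproduces $(-1)^{(q+1)/2}2^{(q-2)/3}$ and agrees with the paper's examples for $q=5,11$. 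However, your route is genuinely different from the paper's. The paper never diagonalizes: it first flips $a_j\mapsto -a_j$ to pass to $1/(a_i^2+a_ia_j+a_j^2)$, rescales columns to get entries $P(a_i/a_j)$ where $P(T)\sim(T^2+T+1)^{q-2}$ is computed explicitly as a polynomial of degree $q-2$ with coefficients $\frac{1}{3}(\chi_3(k)+\chi_3(1-k))$, and then invokes Krattenthaler's formula $\det[P(X_iY_j)]=\prod_i p_i\prod_{i<j}(X_j-X_i)(Y_j-Y_i)$ together with Lerch's theorem on the sign of the inversion permutation $x\mapsto x^{-1}$ on $\mathbb{F}_q^\times$. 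Your circulant diagonalization is essentially the character-theoretic shadow of that lemma, but it replaces the computation of the coefficients of $(T^2+T+1)^{q-2}\bmod(T^q-T)$ and the two permutation-sign lemmas by an eigenvalue computation over $\mathbb{F}_{q^2}$; the price is working in the quadratic extension and tracking Frobenius conjugates, the gain is that everything reduces to the elementary identity $\sum_{x\in\mathbb{F}_q^\times}1/(x-a)=1/(a^q-a)+1/a$ and a counting argument. Since your write-up hedges at the key step (``I expect the strikingly simple formula''), you should carry out that cancellation explicitly in a final version, but the claimed identity is true and the rest follows as you describe.
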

\begin{remark}
We give two examples here. Note that we also view $T_p$ as a matrix over $\mathbb{F}_p$ if $p$ is an odd prime.

{\rm (i)} If $p=5$, then
$$T_p=\frac{11}{596232}=\frac{1}{2}=-2.$$

{\rm (ii)} If $p=11$, then
\begin{align*}
T_p=\frac{393106620416000000}{23008992710579652367225919172202284572822491031943}=\frac{4}{6}=2^3.
\end{align*}
\end{remark}

As a direct consequence of our theorem, we confirm Sun's conjecture.
\begin{corollary}\label{Corollary A}
	Conjecture \ref{Conjecture of Sun} holds.
\end{corollary}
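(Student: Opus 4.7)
The plan is to deduce Corollary \ref{Corollary A} directly from Theorem \ref{Thm. A} by specialising the statement to $q=p$. Since $T_p$ viewed over $\F_p$ agrees with the matrix appearing in Conjecture \ref{Conjecture of Sun}, Theorem \ref{Thm. A} gives
$$2\det T_p=(-1)^{(p+1)/2}\cdot 2^{(p-2)/3+1}=(-1)^{(p+1)/2}\cdot 2^{(p+1)/3}$$
in $\F_p$. It therefore suffices to verify that each of the two factors $(-1)^{(p+1)/2}$ and $2^{(p+1)/3}$ is a quadratic residue modulo $p$.

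For the factor $2^{(p+1)/3}$, I would combine the hypothesis $p\equiv 2\pmod 3$ with the fact that $p$ is odd. Together these force $p\equiv 5\pmod 6$, so that $6\mid p+1$ and the exponent $(p+1)/3=2\cdot (p+1)/6$ is even. Hence $2^{(p+1)/3}=(2^{(p+1)/6})^2$ is manifestly a square in $\F_p^{\times}$.

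For the sign $(-1)^{(p+1)/2}$, I would split into two cases according to $p\pmod 4$. If $p\equiv 3\pmod 4$, then $(p+1)/2$ is even and the sign equals $+1$, which is trivially a quadratic residue. If $p\equiv 1\pmod 4$, then $(p+1)/2$ is odd and the sign equals $-1$; but in this case $-1$ is itself a quadratic residue modulo $p$ by Euler's criterion. In either case the sign is a QR, and multiplying the two QRs shows $2\det T_p$ is a QR modulo $p$, confirming the conjecture.

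Granted Theorem \ref{Thm. A}, no substantive obstacle remains: the deduction reduces to the elementary parity observation that $p\equiv 5\pmod 6$ forces $(p+1)/3$ to be even, together with the classical fact that $-1$ is a QR modulo primes congruent to $1\pmod 4$. All the genuine arithmetic depth has been absorbed into the proof of Theorem \ref{Thm. A} itself.
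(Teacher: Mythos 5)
Your proposal is correct and takes essentially the same route as the paper: both deduce the corollary by specialising Theorem \ref{Thm. A} to $q=p$ and then checking, via elementary parity observations on $(p+1)/2$ and on the exponent of $2$ (using $p\equiv 5\pmod 6$), that the resulting quantity is a quadratic residue. The paper phrases this as the single Legendre-symbol identity $\left(\frac{\det T_p}{p}\right)=\left(\frac{-1}{p}\right)^{(p+1)/2}\left(\frac{2}{p}\right)^{(p-2)/3}=\left(\frac{2}{p}\right)$, which is the same computation you carry out factor by factor.
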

The outline of this paper is as follows. In section 2, we will prove some lemmas which are the key elements in the proof of our theorem. The proofs of Theorem \ref{Thm. A} and Corollary \ref{Corollary A} will be given in section 3.
\maketitle
\section{Some Preparations}

Given any polynomials $A(T),B(T)\in\mathbb{F}_q[T]$, we say that $A(T)$ and $B(T)$ are equivalent (denoted by $A(T)\sim B(T)$) if $A(x)=B(x)$ for each $x\in\mathbb{F}_q$.

Let $\chi_3(\cdot)=(\frac{\cdot}{3})$ be the quadratic character modulo $3$. We first have the following lemma.
\begin{lemma}\label{Lemma equivalent reduced polynomials}
Let $q\equiv 2\pmod 3$ be an odd prime power and let
\begin{equation}\label{Eq. definition of G(T)}
G(T)=1+\frac{1}{3}\sum_{k=2}^{q-2}\left(\chi_3(k)+\chi_3(1-k)\right)T^{k-1}
+\frac{1}{3}T^{q-2}-\frac{2}{3}T^{q-1}.
\end{equation}
Then
$$(T^2+T+1)^{q-2}\sim G(T).$$
\end{lemma}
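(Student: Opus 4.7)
My plan is to verify the equivalence pointwise, showing $G(x) = (x^2+x+1)^{-1}$ for every $x \in \mathbb{F}_q$. This suffices, since when $q \equiv 2 \pmod 3$ the polynomial $T^2+T+1$ is irreducible over $\mathbb{F}_q$ (its roots are primitive cube roots of unity, which lie in $\mathbb{F}_q$ only when $3 \mid q-1$), so $x^2+x+1 \in \mathbb{F}_q^{\times}$ for every $x \in \mathbb{F}_q$, and Fermat's little theorem gives $(x^2+x+1)^{q-2} = (x^2+x+1)^{-1}$.

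The cases $x=0$ and $x=1$ are disposed of directly. At $x=0$, every term of $G$ with positive exponent vanishes and $G(0)=1$. At $x=1$, using the periodicity of $\chi_3$ modulo $3$ together with $q \equiv 2 \pmod 3$, one computes $\sum_{k=2}^{q-2}\chi_3(k) = -1$ and $\sum_{k=2}^{q-2}\chi_3(1-k)=0$, giving $G(1) = 1/3 = (1^2+1+1)^{-1}$.

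For $x \in \mathbb{F}_q^{\times}$ with $x \neq 1$, I use $x^{q-1}=1$ and $x^{q-2}=x^{-1}$ to rewrite $G(x) = \tfrac{1}{3} + \tfrac{1}{3x} + \tfrac{1}{3}\,S(x)$, where $S(x) := \sum_{k=2}^{q-2}(\chi_3(k)+\chi_3(1-k))\, x^{k-1}$. Reindexing and using $\chi_3(-1)=-1$ transforms $S(x)$ into $\sum_{j=1}^{q-3}(\chi_3(j+1)-\chi_3(j))\, x^j$. Dissecting by $j \bmod 3$ shows that the coefficient of $x^j$ equals $1$ for $j \not\equiv 1 \pmod 3$ and $-2$ for $j \equiv 1 \pmod 3$, so $S(x) = \sum_{j=1}^{q-3} x^j - 3\sum_{\substack{1 \le j \le q-3 \\ j \equiv 1 \pmod 3}} x^j$.

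Closing both sums is the heart of the argument. The first equals $-1 - x^{-1}$, via $1+x+\cdots+x^{q-2}=0$ for $x \in \mathbb{F}_q^{\times}\setminus\{1\}$. The second is the geometric progression $x+x^4+\cdots+x^{q-4}$ with ratio $x^3\neq 1$; summing it yields $x \cdot \tfrac{x^{q-2}-1}{x^3-1} = \tfrac{1-x}{x^3-1} = -(x^2+x+1)^{-1}$, using $x^{q-2}=x^{-1}$ and the factoring $x^3-1 = (x-1)(x^2+x+1)$. Substituting back, the constants $\tfrac{1}{3}$ and $\tfrac{1}{3x}$ cancel against $-\tfrac{1}{3}$ and $-\tfrac{1}{3x}$ arising from $\tfrac{1}{3}S(x)$, and what remains is exactly $(x^2+x+1)^{-1}$. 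The main obstacle is the clean bookkeeping of the reindexing and residue-class dissection; the coefficients of $G$ are tailored precisely to make this cancellation work.
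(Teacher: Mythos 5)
Your proof is correct, and it takes a genuinely different route from the paper's. The paper first shows $T^2+T+1$ is irreducible over $\mathbb{F}_q$ (via $-3$ being a nonsquare), hence coprime to $T^q-T$, then asserts ``via a computation'' the polynomial congruence $(T^2+T+1)^2G(T)\equiv T^2+T+1\pmod{(T^q-T)\mathbb{F}_q[T]}$ and cancels the coprime factor to conclude $(T^2+T+1)^{q-2}\equiv G(T)\pmod{T^q-T}$. You instead verify the equivalence pointwise, evaluating $G(x)$ in closed form: after the reindexing $S(x)=\sum_{j=1}^{q-3}(\chi_3(j+1)-\chi_3(j))x^j$ and the dissection by $j\bmod 3$, everything reduces to the full geometric sum $\sum_{j=1}^{q-3}x^j=-1-x^{-1}$ and the lacunary one $x+x^4+\cdots+x^{q-4}=-(x^2+x+1)^{-1}$ (using $x^3\neq 1$ for $x\neq 1$, which again rests on $3\nmid q-1$), and the constants cancel to leave $(x^2+x+1)^{-1}$. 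The two arguments are logically equivalent --- reduction modulo $T^q-T$ is the same as evaluation at all points of $\mathbb{F}_q$ --- but yours makes fully explicit the computation that the paper leaves as a black box, at the cost of a separate treatment of $x=0$ and $x=1$ and some bookkeeping with the character sums (your values $\sum_{k=2}^{q-2}\chi_3(k)=-1$ and $\sum_{k=2}^{q-2}\chi_3(1-k)=0$ check out for $q\equiv 2\pmod 3$). The paper's route is shorter on the page and generalizes more readily to other irreducible quadratics, while yours is self-contained and independently verifiable line by line.
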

\begin{proof}
We first show that $T^2+T+1$ is irreducible over $\mathbb{F}_q[T]$. Set $q=p^r$ with $p$ prime and $r\in\mathbb{Z}^{+}$. As $q\equiv2\pmod3$, clearly $p\equiv2\pmod3$ and $2\nmid r$. Hence
$$(-3)^{\frac{q-1}{2}}=(-3)^{\frac{p-1}{2}(1+p+p^2+\cdots+p^{r-1})}=\left(\frac{-3}{p}\right)^{1+p+p^2+\cdots+p^{r-1}}=(-1)^r=-1.$$
This implies that $-3$ is not a square over $\mathbb{F}_q$. Suppose now $T^2+T+1$ is reducible over $\mathbb{F}_q[T]$. Then there exists an element $\alpha\in\mathbb{F}_q$ such that $\alpha^2+\alpha+1=0$. This implies $(2\alpha+1)^2=-3$, which is a contradiction. Hence $T^2+T+1$ is irreducible over $\mathbb{F}_q[T]$. Moreover, since
$$T^q-T=\prod_{x\in\mathbb{F}_q}\left(T-x\right),$$
we have $T^2+T+1\nmid T^q-T$ and hence $T^2+T+1$ is coprime with $T^q-T$.

Now via a computation, we obtain
\begin{equation*}
(T^2+T+1)^2G(T)\equiv T^2+T+1\equiv (T^2+T+1)^q\pmod{(T^q-T)\mathbb{F}_q[T]}.
\end{equation*}
As $T^2+T+1$ is coprime with $T^q-T$, we obtain
$$(T^2+T+1)^{q-2}\equiv G(T)\pmod{(T^q-T)\mathbb{F}_q[T]}.$$
This implies
$$(T^2+T+1)^{q-2}\sim G(T).$$
In view of the above, we have completed the proof.
\end{proof}
We need the following lemma (cf. \cite[Lemma 10]{K2}).
\begin{lemma}\label{Lemma formula for determinants}
Let $R$ be a commutative ring and let $n$ be a positive integer. Set $P(T)=p_{n-1}T^{n-1}+\cdots+p_1T+p_0\in R[T]$. Then
$$
\det[P(X_iY_j)]_{1\le i,j\le n}
=\prod_{i=0}^{n-1}p_i\prod_{1\le i<j\le n}\left(X_j-X_i\right)\left(Y_j-Y_i\right).
$$
\end{lemma}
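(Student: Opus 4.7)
The plan is to recognize the matrix $[P(X_iY_j)]_{1\le i,j\le n}$ as a three-fold matrix product whose factors have easily computable determinants. Expanding,
$$P(X_iY_j)=\sum_{k=0}^{n-1}p_k X_i^k Y_j^k,$$
which is exactly the $(i,j)$-entry of the product $V\cdot D\cdot W$, where $V=[X_i^{k-1}]_{1\le i,k\le n}$ is the Vandermonde matrix in the $X_i$'s, $D$ is the diagonal matrix $\mathrm{diag}(p_0,p_1,\ldots,p_{n-1})$, and $W=[Y_j^{k-1}]_{1\le k,j\le n}$ is the transpose of the Vandermonde matrix in the $Y_j$'s.

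Next I would take determinants. By multiplicativity,
$$\det[P(X_iY_j)]_{1\le i,j\le n}=\det V\cdot\det D\cdot\det W.$$
The classical Vandermonde identity gives $\det V=\prod_{1\le i<j\le n}(X_j-X_i)$ and $\det W=\det W^{\top}=\prod_{1\le i<j\le n}(Y_j-Y_i)$, while $\det D=\prod_{k=0}^{n-1}p_k$. Substituting these three factors into the displayed equation yields the stated formula.

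Once the correct factorization is spotted, the result reduces to a one-line verification, so I do not expect a genuine obstacle. The only point requiring care is lining up indices so that the exponent $k-1$ in the Vandermonde columns is paired with the coefficient $p_{k-1}$ sitting in the $k$-th diagonal entry of $D$. Since the lemma is cited from \cite{K2}, this confirms that the factorization approach above is the standard one.
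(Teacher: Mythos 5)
Your proof is correct and is the standard argument: the paper itself does not prove this lemma but cites it from Krattenthaler's \emph{Advanced determinant calculus: a complement} (Lemma 10), where the proof is exactly this factorization $[P(X_iY_j)]=V\,\mathrm{diag}(p_0,\dots,p_{n-1})\,W$ into Vandermonde factors. Everything checks out, including the validity of the Vandermonde identity over an arbitrary commutative ring, since it is a polynomial identity with integer coefficients.
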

Now let $m$ be a positive integer. We introduce some basic facts on the permutations over $\mathbb{Z}/m\mathbb{Z}$. Fix an integer $a$ with $(a,m)=1$. Then the map $x\ {\rm mod}\ m \mapsto ax\ {\rm mod}\ m$ induces a permutation $\pi_a(m)$ over $\mathbb{Z}/m\mathbb{Z}$. Lerch \cite{ML} determined the sign of this permutation.
\begin{lemma}\label{Lemma permutation}
Let $\sgn(\pi_a(m))$ denote the sign of the permutation $\pi_a(m)$. Then  $$\sgn(\pi_a(m))=\begin{cases}(\frac{a}{m})&\mbox{if $m$ is odd},\\1&\mbox{if}\
m\equiv2\pmod4,\\(-1)^{\frac{a-1}{2}}&\mbox{if}\ m\equiv0\pmod4,\end{cases}$$
where $(\frac{\cdot}{m})$ denotes the Jacobi symbol if $m$ is odd.
\end{lemma}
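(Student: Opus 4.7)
The plan is to establish Lerch's sign formula by reducing, via Chinese Remainder Theorem multiplicativity, to prime-power moduli, and then analyzing each case by direct cycle counting. This is essentially Zolotarev's classical strategy.

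First I would prove a multiplicativity lemma: if $m = m_1 m_2$ with $\gcd(m_1,m_2)=1$, the CRT bijection $\mathbb{Z}/m\mathbb{Z} \cong \mathbb{Z}/m_1\mathbb{Z}\times \mathbb{Z}/m_2\mathbb{Z}$ conjugates $\pi_a(m)$ to the product permutation $\pi_a(m_1)\times \pi_a(m_2)$. For permutations $\sigma_i$ of sets of cardinality $n_i$, one has $\sgn(\sigma_1\times \sigma_2) = \sgn(\sigma_1)^{n_2}\sgn(\sigma_2)^{n_1}$, verified by writing $\sigma_1\times\sigma_2 = (\sigma_1\times\mathrm{id})\circ(\mathrm{id}\times\sigma_2)$ and observing each factor is a disjoint union of copies. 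Since the Jacobi symbol is also multiplicative, this reduces the formula to prime-power moduli.

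For odd prime powers $m = p^k$, I would stratify $\mathbb{Z}/p^k\mathbb{Z}$ by $p$-adic valuation into $\pi_a$-invariant layers $L_i = p^i(\mathbb{Z}/p^{k-i}\mathbb{Z})^\times$ for $0\le i<k$, together with $\{0\}$. On $L_i$ the permutation is conjugate to multiplication by $a$ on the cyclic group $(\mathbb{Z}/p^{k-i}\mathbb{Z})^\times$ of even order $N_i = p^{k-i-1}(p-1)$. In a cyclic group of order $N$, multiplication by an element of order $e$ has exactly $N/e$ cycles of length $e$, contributing sign $(-1)^{N-N/e}$. Multiplying contributions across layers and comparing with $(a/p^k) = (a/p)^k$ yields $\sgn(\pi_a(p^k)) = (a/p^k)$; the key local identity is that when $e$ is the order of $a$ in $\mathbb{F}_p^\times$, the parity of $(p-1)/e$ encodes the Legendre symbol $(a/p)$. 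For powers of two, I would handle $k=1,2$ by inspection (the only nontrivial case being $m=4,a=3$, whose transposition $(1\ 3)$ matches $(-1)^{(a-1)/2}$), and for $k\ge 3$ exploit $(\mathbb{Z}/2^k\mathbb{Z})^\times \cong \{\pm 1\}\times \langle 5\rangle$ and again stratify by $2$-adic valuation; the critical observation is that the $\langle 5\rangle$-factor always contributes an even number of transpositions and hence sign $+1$, so that the total sign is governed entirely by $a\bmod 4$, giving $(-1)^{(a-1)/2}$.

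Assembling by multiplicativity for $m = 2^k m'$ with $m'$ odd: since $m'$ is odd, $\sgn(\pi_a(2^k))^{m'} = \sgn(\pi_a(2^k))$, and since $2^k$ is even, $\sgn(\pi_a(m'))^{2^k} = 1$. This recovers the three stated cases, with $m \equiv 2\pmod 4$ corresponding to $k=1$ (where $\pi_a(2)$ is the identity) and $m\equiv 0\pmod 4$ to $k\ge 2$. The main obstacle is the cycle-counting step for $(\mathbb{Z}/2^k\mathbb{Z})^\times$ with $k\ge 3$: the group is noncyclic, and one must verify that the $\langle 5\rangle$-factor always contributes trivially to the sign so that only $a\bmod 4$ matters. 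A subsidiary subtlety is matching the parities of cycle counts across $p$-adic layers with the multiplicativity $(a/p^k) = (a/p)^k$ of the Jacobi symbol---one must use that each $N_i$ is even and that $(p-1)/e$ has the correct parity to reproduce the Legendre symbol on every layer.
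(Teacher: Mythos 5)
The paper does not prove this lemma at all: it is quoted as a classical result of Lerch with a citation to \cite{ML}, so any comparison is between your argument and the literature rather than with an in-paper proof. Your proposal is a correct rendition of the standard Zolotarev--Frobenius--Lerch argument, and all the main steps check out: the sign formula $\sgn(\sigma_1\times\sigma_2)=\sgn(\sigma_1)^{n_2}\sgn(\sigma_2)^{n_1}$ is right, the stratification of $\mathbb{Z}/p^k\mathbb{Z}$ by $p$-adic valuation into invariant layers $\cong(\mathbb{Z}/p^{k-i}\mathbb{Z})^\times$ is the right move, and the local computation (cycle count $N/e=\gcd(N,t)$ for $a=g^t$, whose parity is the parity of $t$ because $N$ is even, hence recovers the Legendre symbol on each layer and gives $(\frac{a}{p})^k=(\frac{a}{p^k})$) is sound. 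The one place I would tighten is the $2$-power case for $k\ge 3$: the phrase ``the $\langle 5\rangle$-factor always contributes an even number of transpositions'' is not quite the right accounting, since multiplication by $a=\pm 5^s$ does not split as a product of permutations of the two factors. The clean statement is that on any layer isomorphic to $(\mathbb{Z}/2^{j}\mathbb{Z})^\times$ with $j\ge 3$, the permutation consists of $2^{j-1}/e$ cycles where $e$ is the order of $a$ in that group; since the group $\{\pm1\}\times\langle5\rangle$ has exponent $2^{j-2}<2^{j-1}$, the number of cycles is always even and the sign is $+1$, so the only layer that can contribute is the one isomorphic to $(\mathbb{Z}/4\mathbb{Z})^\times$, which gives exactly $(-1)^{(a-1)/2}$. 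With that adjustment your assembly via CRT (the odd part raised to an even exponent disappears, the $2$-part raised to an odd exponent survives) correctly yields all three cases.
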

Recall that
$$\mathbb{F}_q^{\times}=\mathbb{F}_q\setminus\{0\}=\left\{a_1,a_2,\cdots,a_{q-1}\right\}.$$
The map $a_j\mapsto a_j^{-1}$ $(j=1,2,\cdots,q-1)$ induces a permutation $\sigma_{-1}$ on $\mathbb{F}_q^{\times}$. We also need the following lemma.
\begin{lemma}\label{Lemma Inv of the inverse permutation}
Let notations be as above. Then
$$\sgn(\sigma_{-1})=\sgn(\pi_{-1}(q-1))=(-1)^{\frac{q+1}{2}}.$$
\end{lemma}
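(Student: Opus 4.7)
The plan is to transport the permutation $\sigma_{-1}$ on $\mathbb{F}_q^{\times}$ to the negation permutation $\pi_{-1}(q-1)$ on $\mathbb{Z}/(q-1)\mathbb{Z}$ via a primitive root, and then read off the sign using Lemma~\ref{Lemma permutation}.

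First I would fix a generator $g$ of the cyclic group $\mathbb{F}_q^{\times}$ and consider the discrete-logarithm bijection $g^k \mapsto k$ from $\mathbb{F}_q^{\times}$ onto $\mathbb{Z}/(q-1)\mathbb{Z}$. Under this identification, inversion $a \mapsto a^{-1}$ on $\mathbb{F}_q^{\times}$ corresponds precisely to negation $k \mapsto -k$ on $\mathbb{Z}/(q-1)\mathbb{Z}$, because $(g^k)^{-1} = g^{-k}$. Since relabeling the underlying set amounts to conjugation in the symmetric group, and conjugate permutations share the same sign, this immediately yields $\sgn(\sigma_{-1}) = \sgn(\pi_{-1}(q-1))$, which is the first equality in the statement.

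Next I would apply Lemma~\ref{Lemma permutation} with $a = -1$ and $m = q-1$. Because $q$ is an odd prime power, $q-1$ is even, so only the two even cases of the lemma are relevant. When $q \equiv 3 \pmod 4$ we have $q-1 \equiv 2 \pmod 4$ and the lemma gives $+1$; when $q \equiv 1 \pmod 4$ we have $q-1 \equiv 0 \pmod 4$ and the lemma gives $(-1)^{(-1-1)/2} = -1$. A short parity check then confirms that $(-1)^{(q+1)/2}$ matches these two values in both cases, completing the proof.

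There is no genuine obstacle here: the argument is essentially a one-line conjugation reduction followed by a bookkeeping call to the (already quoted) theorem of Lerch. The only mild care needed is to fix a consistent integer representative when plugging $a = -1$ into the formula $(-1)^{(a-1)/2}$; using either $a = -1$ directly or its positive reduction $a = m-1 = q-2$ yields the same sign, so this causes no trouble.
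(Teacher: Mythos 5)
Your proposal is correct and follows essentially the same route as the paper: conjugating $\sigma_{-1}$ into $\pi_{-1}(q-1)$ via a fixed generator and then invoking Lerch's theorem with $a=-1$, $m=q-1$, splitting on $q\bmod 4$. The parity check of $(-1)^{(q+1)/2}$ against the two cases matches the paper's conclusion exactly.
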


\begin{proof}
	Fix a generator $g$ of $\mathbb{F}_q^{\times}$. Let $f$ be the bijection on $\mathbb{F}_q^{\times}$ which sends $a_j$ to $g^j$ $(j=1,2,\cdots,q-1)$. Then it is easy to see that
	$$\sgn(\sigma_{-1})=\sgn(f\circ \sigma_{-1}\circ f^{-1}).$$
Note that $f\circ \sigma_{-1}\circ f^{-1}$ is the permutation on $\mathbb{F}_q^{\times}$ which sends $g^j$ to $g^{-j}$ $(j=1,2,\cdots,q-1)$. This permutation indeed corresponds to the permutation $\pi_{-1}(q-1)$ over $\mathbb{Z}/(q-1)\mathbb{Z}$ which sends $j$ {\rm mod} $(q-1)$ to $-j$ {\rm mod } $(q-1)$. Now our desired result follows from Lemma \ref{Lemma permutation}.

This completes the proof.
\end{proof}

\section{Proof of The Main Result}
{\bf Proof of Theorem \ref{Thm. A}.} Recall that
$$T_q=\bigg[\frac{1}{a_i^2-a_ia_j+a_j^2}\bigg]_{1\le i,j\le q-1}.$$
By Lemma \ref{Lemma permutation}
$$
\det T_q
=(-1)^{\frac{q-1}{2}}\det\bigg[\frac{1}{a_i^2+a_ia_j+a_j^2}\bigg]_{1\le i,j\le q-1}.
$$
Also,
$$
\det\bigg[\frac{1}{a_i^2+a_ia_j+a_j^2}\bigg]_{1\le i,j\le q-1}
=\prod_{j=1}^{q-1}\frac{1}{a_j^2}\cdot\det\bigg[\frac{1}{(a_i/a_j)^2+a_i/a_j+1}\bigg]_{1\le i,j\le q-1}.
$$
Since $q\equiv2\pmod3$, we have $a_i^2+a_ia_j+a_j^2\neq0$ for any $1\le i,j\le q-1$.
Hence for any $1\le i,j\le q-1$ we have
$$
\frac{1}{(a_i/a_j)^2+a_i/a_j+1}=
\left((a_i/a_j)^2+a_i/a_j+1\right)^{q-2}.
$$
By Lemma \ref{Lemma equivalent reduced polynomials} we have $(T^2+T+1)^{q-2}\sim G(T)$, where $G(T)$ is defined by (\ref{Eq. definition of G(T)}). Hence
$$\left((a_i/a_j)^2+a_i/a_j+1\right)^{q-2}=G(a_i/a_j),$$
for any $1\le i,j\le q-1$. As $(a_i/a_j)^{q-1}=1$ for any $1\le i,j\le q-1$ , we have
$$G(a_i/a_j)=H(a_i/a_j),$$
where
$$
H(T)=G(T)-\frac{2}{3}+\frac{2}{3}T^{q-1}=\frac{1}{3}+\frac{1}{3}
\sum_{k=2}^{q-2}\left(\chi_3(k)+\chi_3(1-k)\right)T^{k-1}
+\frac{1}{3}T^{q-2}.
$$

Let
$$S(T)=\prod_{1\le j\le q-1}\left(T-a_j\right)$$
and let $S'(T)$ be the formal derivative of $S(T)$. It is easy to verify that
$$S(T)=\prod_{1\le j\le q-1}\left(T-a_j\right)=T^{q-1}-1.$$
By this it is clear that $S'(T)=(q-1)T^{q-2}=-T^{q-2}$ and
\begin{equation}\label{Eq. Production of all aj}
\prod_{1\le j\le q-1}a_j=-1.	
	\end{equation}
By the above we obtain
\begin{equation}\label{Eq. A in the proof of theorem}
\det T_q=(-1)^{\frac{q-1}{2}}\det\left[H(a_i/a_j)\right]_{1\le i,j\le q-1}.
\end{equation}
By Lemma \ref{Lemma formula for determinants} we know that $\det[H(a_i/a_j)]_{1\le i,j\le q-1}$ is equal to
$$
\frac{1}{3^{q-1}}\prod_{k=2}^{q-2}\left(\chi_3(k)+\chi_3(1-k)\right)\prod_{1\le i<j\le q-1}\left(a_j-a_i\right)\left(\frac{1}{a_j}-\frac{1}{a_i}\right).
$$
We first consider the product
$$
\prod_{1\le i<j\le q-1}\left(a_j-a_i\right)\left(\frac{1}{a_j}-\frac{1}{a_i}\right).
$$
By Lemma \ref{Lemma Inv of the inverse permutation} it is easy to see that
\begin{equation*}
\prod_{1\le i<j\le q-1}\left(a_j-a_i\right)\left(\frac{1}{a_j}-\frac{1}{a_i}\right)=
(-1)^{\frac{q+1}{2}}\prod_{1\le i<j\le q-1}\left(a_j-a_i\right)^2.
\end{equation*}
It is easy to verify that
\begin{align*}
	\prod_{1\le i<j\le q-1}(a_j-a_i)^2
	&=(-1)^{\frac{(q-1)}{2}}\prod_{1\le i\neq j \le q-1}(a_j-a_i)\\
	&=(-1)^{\frac{(q-1)}{2}}\prod_{1\le j\le q-1}\prod_{i\neq j}(a_j-a_i)\\
	&=(-1)^{\frac{(q-1)}{2}}\prod_{1\le j\le q-1}S'(a_j)\\
	&=(-1)^{\frac{q-1}{2}}\prod_{1\le j\le q-1}\frac{-1}{a_j}=(-1)^{\frac{q+1}{2}}
\end{align*}
The last equality follows from (\ref{Eq. Production of all aj}). Hence
\begin{equation}
	\prod_{1\le i<j\le q-1}\left(a_j-a_i\right)\left(\frac{1}{a_j}-\frac{1}{a_i}\right)=1.
\end{equation}
We now turn to the product
$$
\prod_{k=2}^{q-2}\left(\chi_3(k)+\chi_3(1-k)\right).
$$
By definition
$$\chi_3(k)+\chi_3(1-k)=\begin{cases}1&\mbox{if}\ k\equiv 0,1\pmod 3,
\\-2&\mbox{if}\ k\equiv 2\pmod 3.
\end{cases}$$
Hence
\begin{equation}\label{Eq. C in the proof of theorem}
\prod_{k=2}^{q-2}\left(\chi_3(k)+\chi_3(1-k)\right)=(-2)^{\frac{q-2}{3}}.
\end{equation}
In view of (\ref{Eq. A in the proof of theorem})-(\ref{Eq. C in the proof of theorem}), we obtain
$$\det T_q=(-1)^{\frac{q+1}{2}}2^{\frac{q-2}{3}}\in\mathbb{F}_p,$$
where $p$ is the characteristic of $\mathbb{F}_q$. This completes the proof.\qed

{\bf Proof of Corollary \ref{Corollary A}.} Let $p\equiv2\pmod3$ be an odd prime. Then by Theorem \ref{Thm. A} we have
$$\left(\frac{\det T_p}{p}\right)
=\left(\frac{-1}{p}\right)^{\frac{p+1}{2}}\left(\frac{2}{p}\right)^{\frac{p-2}{3}}
=\left(\frac{2}{p}\right).$$

This completes the proof. \qed

\end{document}